\documentclass{article}
\usepackage{eurosym}
\usepackage{amssymb, amsmath}
\usepackage{verbatim}
\DeclareUnicodeCharacter{200B}{\hskip 0pt}
\newtheorem{theorem}{Theorem}[section]

\newtheorem{proposition}[theorem]{Proposition}
\newtheorem{corollary}[theorem]{Corollary}
\newtheorem{remark}[theorem]{Remark}

\numberwithin{equation}{section}

\newenvironment{proof}[1][Proof]{\noindent\textbf{#1.} }{\ \rule{0.5em}{0.5em}}

\begin{document}

\begin{center}
{\large \ Some properties of Padovan matrices and bi-periodic Padovan matrices}%
\begin{equation*}
\end{equation*}

\textbf{Diana Savin}%
\begin{equation*}
\end{equation*}
\end{center}

\textbf{Abstract.} {\small Let $\left(P_{n}\right)_{n\geq0}$ be the sequence of  bi-periodic Padovan numbers and let $\left(M_{p_{n}}\right)_{n\geq0}$ 
be the sequence  of bi-periodic Padovan matrices. In this article we study when these matrices are diagonalizable and we obtain a certain connection with the Lucas number sequence.
We also obtain some connections of these matrices with the generating matrix $Q$ for the Padovan numbers.} 
\medskip

\textbf{Key Words}: difference equations; Padovan numbers; Padovan matrices; bi-periodic Padovan numbers, bi-periodic Padovan matrices, diagonalizable matrices.
\medskip

\textbf{2020 AMS Subject Classification}: 11B39; 11B75; 05C50; 05B10; 15A18. 
\begin{equation*}
\end{equation*}
 \section{Introduction and preliminaries}
\ \ \    The Fibonacci sequence $\left(F_n\right)_{n\ge 0}$ is defined by second-order linear recurrence relation 
$$F_{n}=F_{n-1}+F_{n-2},\ n\geq 2,$$
with the first two terms $F_{0}=0, F_{1}=1.$\\ 
   \ \ \  Closely related to the Fibonacci sequence was the Lucas sequence $\left(L_n\right)_{n\ge 0}$, also defined as a second-order linear recurrence
	$$L_{n}=L_{n-1}+L_{n-2},\ n\geq 2,$$
with the first two terms $L_{0}=2, L_{1}=1.$\\
If we consider the characteristic equation $t^{2}-t-1=0$ associated with the recurrence satisfied by the Fibonacci sequence and the Lucas sequence and denote by $t_{1}=\alpha$ and $t_{2}=\beta$ the solutions of this equation, it immediately follows:\\
- \textbf{Binet's formula for the Fibonacci numbers}: $F_{n}=\frac{\alpha^{n}-\beta^{n}}{\alpha-\beta}$\\
and\\
- \textbf{Binet's formula for the Lucas numbers}: $L_{n}=\alpha^{n}+\beta^{n}$.\\
   \ \ \  The Padovan sequence $\left(p_{n}\right)_{n\ge 0}$ is defined by a third-order linear recurrence relation 
              $$p_{n}=p_{n-2}+p_{n-3},\ n\geq 3,$$ 
with the first three terms $p_{0}=1,p_{1}=1$ and $p_{2}=1$. This number sequence was introduced by Richard Padovan in \cite{padovan}). Many mathematicians work with the shifted Padovan sequence, presenting as initial values ​​$p_{0}=1,p_{1}=0$ and $p_{2}=1$.\\
Many papers have been written about  the study of the properties of Fibonacci sequence, Lucas sequence,
Padovan sequence or others particular difference equations of degree $2$ or of degree $3$ with integer coefficients (see \cite{FlSa; 18}, 
\cite{Mu; 21}, \cite{Sa; 15}, \cite{Sa; 17}, \cite{Sa; 19}, \cite{Sa; 22}, \cite{Tan; Sa; Yil 23}, \cite{Ta; 21}).\\
In the paper \cite{Shannon; Anderson; Horadam; 06}, the authors considered the generating matrix for Padovan numbers: $Q=\left( 
\begin{array}{ccccc}
0 & 1 & 1  \\ 
1 & 0 & 0 \\ 
0 & 1 & 0 \\ 
\end{array}%
\right).$ They proved that  $Q^{n}=\left( 
\begin{array}{ccccc}
p_{n-2} & p_{n-1} & p_{n-3}  \\ 
p_{n-3} & p_{n-2} & p_{n-4} \\ 
p_{n-4} & p_{n-3} & p_{n-5} \\ 
\end{array}%
\right).$ \\
The matrix $Q$ has one real eigenvalue and two non-real complex eigenvalues, so it is not diagonalizable over $\mathbb{R}$ (see \cite{Vie; Catarino 22}).\\
     \ \ \    In the paper  \cite{diskaya}, Diskaya and Menken introduced the sequence $\left(P_n\right)_{n\ge 0}$ of  bi-periodic Padovan numbers like this

\[P_n =\begin{cases}a P_{n-2} + P_{n-3}, & \text{if } n \text{ is even}, \\b P_{n-2} + P_{n-3}, & \text{if } n \text{ is odd},\end{cases}\quad \text{for } n \geq 3,\] with the initial values $P_0 = 1,\ P_1 = 0,\ \text{and} \ P_2 = a.$
where where $a$ and $b$ are nonzero real numbers.\\
Also, in the paper  \cite{diskaya}, the authors introduced the sequence $\left(M_{p_{n}}\right)_{n\geq0}$ of  bi-periodic Padovan matrices like this

 \begin{equation*}
M_{p_{n}} =\begin{cases}a M_{p_{n-2}} + M_{p_{n-3}}, & \text{if } n \text{ is even}, \\b M_{p_{n-2}} + M_{p_{n-3}}, & \text{if } n \text{ is odd},\end{cases}\quad \text{for } n \geq 3,  \tag{1.1}
 \end{equation*}
where $a$ and $b$ are nonzero real numbers, with the initial terms 
$M_{p_{0}}=I_{3}, $
$M_{p_{1}}=\left( 
\begin{array}{ccccc}
0 & 1 & 0  \\ 
0 & 0 & 1 \\ 
1 & a & 0 \\ 
\end{array}%
\right), $
$M_{p_{2}}=\left( 
\begin{array}{ccccc}
0 & 0 & 1  \\ 
1 & a & 0 \\ 
0 & 1 & a \\ 
\end{array}%
\right).$\\
In this article we obtain some properties of matrices $Q$ and $M_{p_{n}},$ by studying whether they are diagonalizable or not.\\
In the article $\cite{Sa; 22},$ starting from the diagonalizable matrix $M=\left( 
\begin{array}{ccccc}
1 & 3 & 1  \\ 
1 & 0 & 0 \\ 
0 & 1 & 0 \\ 
\end{array}%
\right)$ we obtained a set of diagonalizable matrices, in connection with the matrix $M.$ In this article we find out for which real numbers $a$ and $b$ all matrices in the sequence $\left(M_{p_{n}}\right)_{n\geq0}$ are diagonalizable. For this we recall what it means that two matrices are simultaneously diagonalizable.\\
Let $A,$$B$$\in$$M_{n}\left(\mathbb{R}\right)$ be two diagonalizable matrices and $\lambda_{1}, \lambda_{2}, ...\lambda_{n}$ be the eigenvalues ​​of the matrix $A,$ and $\lambda^{'}_{1}, \lambda^{'}_{2}, ...\lambda^{'}_{n}$ be the eigenvalues ​​of the matrix $B.$ $A$ and $B$ are called simultaneously diagonalizable if there exists an invertible matrix $T\in$$M_{n}\left(\mathbb{R}\right)$ such that $A=T^{-1}\cdot \left( 
\begin{array}{ccccc}
\lambda_{1} & 0 & 0... & 0  \\ 
0 & \lambda_{2} & 0... & 0 \\ 
0 & ... & ..... & 0\\
0 & ... & ..... & 0\\
0 & ... &.. ..&\lambda_{n}\\ 
\end{array}%
\right)\cdot T$
and $B=T^{-1}\cdot \left( 
\begin{array}{ccccc}
\lambda^{'}_{1} & 0 & 0... & 0  \\ 
0 & \lambda^{'}_{2} & 0... & 0 \\ 
0 & ... & ..... & 0\\
0 & ... & ..... & 0\\
0 & ... &.. ..&\lambda^{'}_{n}\\ 
\end{array}%
\right)\cdot T.$\\
It is known that:\\
i) if $A,B$$\in$$M_{n}\left(\mathbb{R}\right)$ are simultaneously diagonalizable, then
$A\cdot B=B\cdot A$.\\
ii) The converse of i) is valid, provided that one of the matrices $A$ and $B$ has no multiple eigenvalues (see \cite{OlSha; 18}).
\begin{equation*}
\end{equation*}

\section{Results}
\begin{equation*}
\end{equation*}
\begin{proposition}
\label{prop2.1.} Let $\left(M_{p_{n}}\right)_{n\geq0}$ be the sequence of bi-periodic Padovan matrices defined in the introduction section. Then
$M_{p_{n}}\cdot  M_{p_{n+1}}=  M_{p_{n+1}}\cdot  M_{p_{n}}, \ \left(\forall\right) \  n\geq 0.  $

\end{proposition}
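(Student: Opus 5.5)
The plan is to prove more than the statement asks: every term of the sequence $\left(M_{p_{n}}\right)_{n\geq0}$ is a polynomial with real coefficients in the single matrix $A:=M_{p_{1}}$. Any two polynomials in the same matrix commute, because $A^{i}A^{j}=A^{i+j}=A^{j}A^{i}$ and matrix multiplication is bilinear. In particular $M_{p_{n}}\cdot M_{p_{n+1}}=M_{p_{n+1}}\cdot M_{p_{n}}$ for every $n\geq0$, and in fact all the matrices of the sequence commute pairwise.

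\textbf{Step 1: the three initial terms.} We have $M_{p_{0}}=I_{3}=A^{0}$ and $M_{p_{1}}=A$. For $M_{p_{2}}$, I will compute $A^{2}$ row by row. Since the rows of $A$ are $e_{2}^{T}$, $e_{3}^{T}$ and $e_{1}^{T}+ae_{2}^{T}$, the rows of $A^{2}$ are
\begin{itemize}
\item first row: the second row of $A$, namely $(0,0,1)$;
\item second row: the third row of $A$, namely $(1,a,0)$;
\item third row: the first row of $A$ plus $a$ times its second row, namely $(0,1,a)$.
\end{itemize}
This is exactly $M_{p_{2}}$, so $M_{p_{2}}=A^{2}$.

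\textbf{Step 2: induction.} I will use strong induction on $n\geq3$, applying the recurrence (1.1) to $M_{p_{n-2}}$ and $M_{p_{n-3}}$. The induction hypothesis gives real polynomials $f_{n-2}$ and $f_{n-3}$ with $M_{p_{n-2}}=f_{n-2}(A)$ and $M_{p_{n-3}}=f_{n-3}(A)$. Then
$$M_{p_{n}}=c\,f_{n-2}(A)+f_{n-3}(A),$$
where $c=a$ if $n$ is even and $c=b$ if $n$ is odd. This is again a real polynomial in $A$. The parity-dependent coefficient causes no difficulty, since it is only a scalar.

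\textbf{Step 3: conclusion.} Given $M_{p_{n}}=f_{n}(A)$ and $M_{p_{n+1}}=f_{n+1}(A)$, I expand both products as $\sum_{i,j}c_{i}d_{j}A^{i+j}$. The two expansions are identical, which gives the claimed commutation. The only step needing an actual check is the identity $M_{p_{2}}=A^{2}$ in Step 1, which is the computation above. Everything else is formal.
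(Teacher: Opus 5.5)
Your proof is correct, and it follows a genuinely different route from the paper. The paper inducts directly on the commutation relation. It checks $n=0$ (trivial, since $M_{p_0}=I_3$) and $n=1$ (explicit product $M_{p_1}M_{p_2}$). It then writes $M_{p_{n+2}}=aM_{p_n}+M_{p_{n-1}}$ (or with $b$), expands $M_{p_{n+1}}$ once more by the recurrence, and uses the hypothesis for the pairs $(n,n+1)$ and $(n-2,n-1)$ to move $M_{p_{n+1}}$ to the other side.

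You instead prove the structural fact that every $M_{p_n}$ lies in $\mathbb{R}[M_{p_1}]$. Its only non-formal input is $M_{p_2}=M_{p_1}^2$, which the paper records later as Remark 2.6 and which your row computation checks correctly. This buys considerably more:
\begin{itemize}
\item all terms commute pairwise for every real $a,b$, whereas the paper only obtains this in Corollary 2.3 for $a=2$, via simultaneous diagonalization;
\item Propositions 2.2 and 2.5 become almost immediate, since $M_{p_1}=T^{-1}DT$ gives $M_{p_n}=T^{-1}f_n(D)T$ with no appeal to the theorem on commuting matrices with simple spectrum;
\item it avoids the paper's index bookkeeping. As written, the paper's step uses $M_{p_{n-2}}$ and the recurrence for $M_{p_{n+1}}$, so it does not literally cover the pair $(2,3)$; this is harmless only because $M_{p_0}=I_3$.
\end{itemize}
What the paper's argument buys in return is a weaker hypothesis: it needs only that $M_{p_1}$ and $M_{p_2}$ commute, not that $M_{p_2}$ is a polynomial in $M_{p_1}$. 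Your argument adapts to that setting as well, by working in the commutative algebra generated by $M_{p_1}$ and $M_{p_2}$.
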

\begin{proof}
We proceed by mathematical induction over $n.$\\
For $n=0,$ since $M_{p_{0}}=I_{3},$ it immediately follows that 
$$M_{p_{0}}\cdot  M_{p_{1}}=M_{p_{1}}\cdot  M_{p_{0}}=M_{p_{1}}.$$
For $n=1,$ we obtain $M_{p_{1}}\cdot  M_{p_{2}}=$$\left( 
\begin{array}{ccccc}
1 & a & 0  \\ 
0 & 1 & a \\ 
a & a^{2} & 1 \\ 
\end{array}%
\right)=$$M_{p_{2}}\cdot  M_{p_{1}}.$\\
We suppose that the property is true for each $k\leq n$
: $M_{p_{k}}\cdot  M_{p_{k+1}}=  M_{p_{k+1}}\cdot  M_{p_{k}}.$ \\
We prove that the property is true for $n+1$
: $M_{p_{n+1}}\cdot  M_{p_{n+2}}=  M_{p_{n+2}}\cdot  M_{p_{n+1}}.$ \\
If $n$ is even, taking into account the recurrence relation of the  bi-periodic Padovan matrices, we have:
$$M_{p_{n+1}}\cdot  M_{p_{n+2}}=M_{p_{n+1}}\cdot \left(aM_{p_{n}} + M_{p_{n-1}}\right)=$$
$$aM_{p_{n+1}}\cdot M_{p_{n}}+ M_{p_{n+1}}\cdot M_{p_{n-1}}=$$
$$aM_{p_{n+1}}\cdot M_{p_{n}}+ \left(bM_{p_{n-1}} + M_{p_{n-2}}\right)\cdot M_{p_{n-1}}=$$
$$aM_{p_{n+1}}\cdot M_{p_{n}}+ bM^{2}_{p_{n-1}} + M_{p_{n-2}}\cdot M_{p_{n-1}}.$$
Applying the inductive hypothesis and the recurrence relation of the  bi-periodic Padovan matrices, we obtain:
$$M_{p_{n+1}}\cdot  M_{p_{n+2}}=aM_{p_{n}}\cdot M_{p_{n+1}}+ bM^{2}_{p_{n-1}} + M_{p_{n-1}}\cdot M_{p_{n-2}}=$$
$$aM_{p_{n}}\cdot M_{p_{n+1}}+ M_{p_{n-1}}\cdot \left(bM_{p_{n-1}} + M_{p_{n-2}}\right)=$$
$$aM_{p_{n}}\cdot M_{p_{n+1}}+ M_{p_{n-1}}\cdot M_{p_{n+1}}=$$
$$\left(aM_{p_{n}} + M_{p_{n-1}}\right)\cdot M_{p_{n+1}}=M_{p_{n+2}}\cdot M_{p_{n+1}}.$$
Analogously, we show that, if $n$ is odd, then\\ 
$M_{p_{n+1}}\cdot  M_{p_{n+2}}=  M_{p_{n+2}}\cdot  M_{p_{n+1}}.$
\end{proof}

\smallskip

We now study for which real numbers $a,$ the Padovan bi-periodic matrices are diagonalizable.\\
First, we consider $a=2$ and obtain the following result
\begin{proposition}
\label{prop2.2.} Let $\left(M_{p_{n}}\right)_{n\geq0}$ be the sequence of bi-periodic Padovan matrices with $a=2$ and $b$ is an arbitrary real number. Then, we have:\\
i) the matrices $M_{p_{n}}$ are diagonalizable, for any $n$ positive integer;\\
ii) if $\lambda_{1n},$ $\lambda_{2n}$ and $\lambda_{3n}$
are the eigenvalues of the matrix $M_{p_{n}},$ for each $n$ positive integer, then 
\[\lambda_{in} =\begin{cases}2\lambda_{in-2} + \lambda_{in-3}, & \text{if } n \text{ is even}, \\b\lambda_{in-2} + \lambda_{in-3}, & \text{if } n \text{ is odd},\end{cases}\quad \text{for } n \geq 3,\ i=\overline{1,3}.\]

\end{proposition}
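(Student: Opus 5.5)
The plan is to show that every $M_{p_{n}}$ is a polynomial in the single matrix $A=M_{p_{1}}$, with polynomials obeying the recurrence (1.1). If $A$ is diagonalizable over $\mathbb{R}$ with distinct eigenvalues, then one invertible matrix $T$ diagonalizes all the $M_{p_{n}}$ at once. Part ii) then comes from applying (1.1) to the diagonal forms.

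\textbf{Step 1 (polynomial form).} A direct multiplication gives $A^{2}=M_{p_{2}}$: the rows of $A^{2}$ are row 2 of $A$, row 3 of $A$, and row 1 $+\,a\cdot$row 2 of $A$, which are exactly the rows of $M_{p_{2}}$. Define polynomials $f_{0}=1$, $f_{1}=x$, $f_{2}=x^{2}$, and for $n\geq 3$
\[
f_{n}=\begin{cases} a f_{n-2}+f_{n-3}, & n \text{ even},\\ b f_{n-2}+f_{n-3}, & n \text{ odd}.\end{cases}
\]
By induction on $n$, using (1.1), we get $M_{p_{n}}=f_{n}(A)$ for all $n\geq 0$. (This also recovers Proposition \ref{prop2.1.}, since all these matrices commute.)

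\textbf{Step 2 (diagonalizing $A$ when $a=2$).} $A$ is a companion-type matrix. Expanding $\det(xI_{3}-A)$ gives $x^{3}-ax-1$, which for $a=2$ equals $x^{3}-2x-1=(x+1)(x^{2}-x-1)$. Its roots are $-1$, $\alpha=\frac{1+\sqrt5}{2}$ and $\beta=\frac{1-\sqrt5}{2}$. These are three distinct real numbers, so there is an invertible $T\in M_{3}(\mathbb{R})$ with
\[
A=T^{-1}\,\mathrm{diag}(-1,\alpha,\beta)\,T .
\]
This is where the link with the Lucas numbers enters: for example, the power sums of the nontrivial eigenvalues are $\alpha^{k}+\beta^{k}=L_{k}$.

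\textbf{Step 3 (conclusion).} Since $f_{n}(T^{-1}DT)=T^{-1}f_{n}(D)T$, we get
\[
M_{p_{n}}=T^{-1}\,\mathrm{diag}\bigl(f_{n}(-1),f_{n}(\alpha),f_{n}(\beta)\bigr)\,T,
\]
so each $M_{p_{n}}$ is diagonalizable for arbitrary $b$, which proves i). For ii), write $\lambda_{1}=-1$, $\lambda_{2}=\alpha$, $\lambda_{3}=\beta$ and set $\lambda_{in}=f_{n}(\lambda_{i})$. The recurrence defining $f_{n}$ then gives the stated recurrence for $\lambda_{in}$ directly.

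The only delicate point is the labelling of the eigenvalues in ii). The statement must be read with the eigenvalues indexed consistently across $n$, since $f_{n}$ may map two different $\lambda_{i}$ to the same value, and then "the" $i$-th eigenvalue is ambiguous. The common diagonalizing matrix $T$ from Step 2 fixes this labelling, which is why I would diagonalize $A$ once rather than treat each $M_{p_{n}}$ separately. Beyond that, the expected work is routine: checking $A^{2}=M_{p_{2}}$ and factoring the characteristic polynomial.
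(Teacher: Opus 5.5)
Your proof is correct, but you obtain the common diagonalizing matrix differently from the paper.

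\textbf{The paper's route.} The paper computes the spectra of $M_{p_1}$ and $M_{p_2}$ separately; each has three distinct real eigenvalues. It then combines the commutation $M_{p_1}M_{p_2}=M_{p_2}M_{p_1}$ from Proposition 2.1 with the cited theorem that commuting diagonalizable matrices, one of them with simple spectrum, are simultaneously diagonalizable. An induction through (1.1) then carries the common $T$ to all $n$.

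\textbf{Your route.} You use $M_{p_2}=M_{p_1}^2$, which the paper only records afterwards as Remark 2.6, to write $M_{p_n}=f_n(M_{p_1})$. One diagonalization of $M_{p_1}$ then does everything. You need no external theorem and no separate eigenvalue computation for $M_{p_2}$. The same argument also applies verbatim to Proposition 2.5 once Proposition 2.4 is known.

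\textbf{What your route gives beyond the paper's proof.}
\begin{itemize}
\item The closed form $\lambda_{in}=f_n(\lambda_i)$, not only a recurrence.
\item Proposition 2.1 and Corollary 2.3 as immediate byproducts.
\item A justification of the specific pairing of eigenvalues written in (2.2), namely that $1,\alpha^2,\beta^2$ sit opposite $-1,\alpha,\beta$. The commutation argument by itself does not determine this pairing, and the paper asserts it without comment.
\end{itemize}

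The paper's route mainly has the advantage of reusing Proposition 2.1. It gains no generality, since any matrix commuting with a matrix of simple spectrum is a polynomial in it.
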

  
\begin{proof}
$M_{p_{0}}=I_{3}$  is diagonalizable and $\lambda_{10}=\lambda_{20}=\lambda_{30}=1$.\\ 
$M_{p_{1}}=\left( 
\begin{array}{ccccc}
0 & 1 & 0  \\ 
0 & 0 & 1 \\ 
1 & 2 & 0 \\ 
\end{array}%
\right) .$\\
 Solving the characteristic equation $det\left(M_{p_{1}}-\lambda\cdot I_{3}\right)=0$, we obtain the eigenvalues:
$\lambda_{11}=-1,$  $\lambda_{21}=\frac{1+\sqrt{5}}{2},$ $\lambda_{31}=\frac{1-\sqrt{5}}{2}.$ Since $M_{p_{1}}$
has $ 3$ real  distinct eigenvalues, it results immediately that the matrix $M_{p_{1}}$ is diagonalizable.\\ \\
$M_{p_{2}}=\left( 
\begin{array}{ccccc}
0 & 0 & 1  \\ 
1 & 2 & 0 \\ 
0 & 1 & 2 \\ 
\end{array}%
\right).$ Solving the equation $det\left(M_{p_{2}}-\lambda\cdot I_{3}\right)=0$, we obtain the eigenvalues: $\lambda_{12}=1,$  $\lambda_{22}=\frac{3+\sqrt{5}}{2},$ $\lambda_{32}=\frac{3-\sqrt{5}}{2}.$ Since $M_{p_{2}}$ has $ 3$ real  distinct eigenvalues, it results immediately that the matrix $M_{p_{2}}$ is diagonalizable.\\
But, applying to Proposition \ref{prop2.1.}, $M_{p_{1}}\cdot M_{p_{2}}=M_{p_{2}}\cdot M_{p_{1}}$ and            according to 8.3.26, p.429 (\cite{OlSha; 18}) we obtain that the matrices $M_{p_{1}}$ and $M_{p_{2}}$ are simultaneously diagonalizable, so there exists  an invertible matrix $T$$\in 
\mathcal{M}_{3}\left( \mathbb{R}\right) $ such that: 
\begin{equation}
M_{p_{1}}= T^{-1}\cdot \left( 
\begin{array}{ccccc}
\lambda_{11} & 0 & 0  \\ 
0 & \lambda_{21} & 0 \\ 
0 & 0 & \lambda_{31} \\ 
\end{array}%
\right)\cdot T=
 T^{-1}\cdot \left( 
\begin{array}{ccccc}
-1 & 0 & 0  \\ 
0 & \frac{1+\sqrt{5}}{2} 
 & 0 \\ 
0 & 0 & \frac{1-\sqrt{5}}{2} 
 \\ 
\end{array}%
\right)\cdot T. 
\tag{2.1.}
\end{equation}
 and 
\begin{equation}
M_{p_{2}}= T^{-1}\cdot \left( 
\begin{array}{ccccc}
\lambda_{12} & 0 & 0  \\ 
0 & \lambda_{22} & 0 \\ 
0 & 0 & \lambda_{32} \\ 
\end{array}%
\right)\cdot T
=
 T^{-1}\cdot \left( 
\begin{array}{ccccc}
1 & 0 & 0  \\ 
0 & \frac{3+\sqrt{5}}{2} 
 & 0 \\ 
0 & 0 & \frac{3-\sqrt{5}}{2} 
 \\ 
\end{array}%
\right)\cdot T.  \tag{2.2.}
\end{equation}
From the relations (2.1),  (2.2) and the recurrence relation of the  bi-periodic Padovan matrices,  it follows that
\begin{equation}
M_{p_{3}}= T^{-1}\cdot \left( 
\begin{array}{ccccc}
b\lambda_{11}+\lambda_{10} & 0 & 0  \\ 
0 & b\lambda_{21}+\lambda_{20} & 0 \\ 
0 & 0 & b\lambda_{31}+\lambda_{30} \\ 
\end{array}% 
\right)\cdot T. \tag{2.3.}
\end{equation}
and 
\begin{equation}
M_{p_{4}}= T^{-1}\cdot \left( 
\begin{array}{ccccc}
2\lambda_{12}+\lambda_{11} & 0 & 0  \\ 
0 & 2\lambda_{22}+\lambda_{21} & 0 \\ 
0 & 0 & 2\lambda_{32}+\lambda_{31} \\ 
\end{array}%
\right)\cdot T. \tag{2.4.}
\end{equation}
From (2.3.), it results that $\lambda_{i3}=b\lambda_{i1}+\lambda_{i0},$ for $i=\overline{1,3}.$\\
From (2.4.), it results that $\lambda_{i4}=2\lambda_{i2}+\lambda_{i1},$ for $i=\overline{1,3}.$\\
We prove by mathematical induction over $n\geq1$ the statement\\
$P\left(n\right):$ 
$M_{p_{n}}= T^{-1}\cdot \left( 
\begin{array}{ccccc}
\lambda_{1n} & 0 & 0  \\ 
0 & \lambda_{2n} & 0 \\ 
0 & 0 & \lambda_{3n} \\ 
\end{array}%
\right)\cdot T$\\
and 
\[\lambda_{in} =\begin{cases}2\lambda_{in-2} + \lambda_{in-3}, & \text{if } n \text{ is even}, \\b\lambda_{in-2} + \lambda_{in-3}, & \text{if } n \text{ is odd},\end{cases}\quad \text{for } n \geq 3,\ i=\overline{1,3}.\]
From (2.3.), it results that $P\left(3\right)$ is true.\\
From (2.4.), it results that $P\left(4\right)$ is true.\\
We assume that $P\left(l\right)$ is true for each $l\leq n$ and we show that $P\left(n+1\right)$ is true.\\
We consider two cases.\\
Case 1: when $n$ is odd. Applying (1.1.) and the inductive hypothesis, it follows that
$$M_{p_{n+1}}=2M_{p_{n-1}}+M_{p_{n-2}}= $$
\begin{equation}
=T^{-1}\cdot \left( 
\begin{array}{ccccc}
2\lambda_{1n-1}+\lambda_{1n-2} & 0 & 0  \\ 
0 & 2\lambda_{2n-1}+\lambda_{2n-2} & 0 \\ 
0 & 0 & 2\lambda_{3n-1}+\lambda_{3n-2} \\ 
\end{array}%
\right)\cdot T. \tag{2.5.}
\end{equation}
Case 2: when $n$ is even. Applying (1.1.) and the inductive hypothesis, it follows that
$$M_{p_{n+1}}=bM_{p_{n-1}}+M_{p_{n-2}}= $$
\begin{equation}
=T^{-1}\cdot \left( 
\begin{array}{ccccc}
b\lambda_{1n-1}+\lambda_{1n-2} & 0 & 0  \\ 
0 & b\lambda_{2n-1}+\lambda_{2n-2} & 0 \\ 
0 & 0 & b\lambda_{3n-1}+\lambda_{3n-2} \\ 
\end{array}%
\right)\cdot T. \tag{2.6.}
\end{equation}
From (2.5) and (2.6), it results that $P\left(n+1\right)$ is true.\\
We obtain that $P\left(n\right)$ is true for each positive integer $n\geq 3.$ From here it follows that
the matrices $M_{p_{n}}$ are diagonalizable, for any $n$ positive integer and its eigenvalues ​​are
\[\lambda_{in} =\begin{cases}2\lambda_{in-2} + \lambda_{in-3}, & \text{if } n \text{ is even}, \\b\lambda_{in-2} + \lambda_{in-3}, & \text{if } n \text{ is odd},\end{cases}\quad \text{for } n \geq 3,\ i=\overline{1,3}.\]
\end{proof}

\begin{corollary}
\label{cor2.3}
Let $\left(M_{p_{n}}\right)_{n\geq0}$ be the sequence of bi-periodic Padovan matrices with $a=2$ and $b$ is an arbitrary real number. Then
 $$ M_{p_{n}}\cdot M_{p_{m}}= M_{p_{m}}\cdot M_{p_{n}}, \ \left(\forall\right)\  n,m\in \mathbb{N}.$$

\end{corollary}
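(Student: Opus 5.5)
The plan is to read the corollary off the simultaneous diagonalization built in the proof of Proposition \ref{prop2.2.}. That proof produces a single invertible matrix $T\in\mathcal{M}_{3}(\mathbb{R})$, coming from the commuting pair $M_{p_{1}}, M_{p_{2}}$, such that for every $n\geq 0$
$$M_{p_{n}}=T^{-1}\cdot D_{n}\cdot T,\qquad D_{n}=\mathrm{diag}\left(\lambda_{1n},\lambda_{2n},\lambda_{3n}\right).$$
Given this, the commutation is immediate. For arbitrary $n,m\in\mathbb{N}$,
$$M_{p_{n}}M_{p_{m}}=T^{-1}D_{n}TT^{-1}D_{m}T=T^{-1}D_{n}D_{m}T=T^{-1}D_{m}D_{n}T=M_{p_{m}}M_{p_{n}},$$
because diagonal matrices commute.

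The first step is to check that the representation really holds for every $n$ with the same $T$.

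\textbf{Small indices.} For $n=0$ it is trivial, since $M_{p_{0}}=I_{3}=T^{-1}I_{3}T$. For $n=1,2$ it is exactly (2.1) and (2.2).

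\textbf{Larger indices.} For $n\geq 3$ I will use the induction statement $P(n)$ from the proof of Proposition \ref{prop2.2.}. The inductive step uses only the recurrence (1.1) and linearity of $X\mapsto T^{-1}XT$, so it preserves the common conjugating matrix $T$. This is the one point where care is needed: the statement of Proposition \ref{prop2.2.} only asserts diagonalizability, which by itself does not give commutation. The stronger fact I need, a common $T$, is established inside its proof, and I will make this explicit.

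If one prefers to avoid relying on the internals of that proof, there is a self-contained alternative by strong induction on $n$. Show that every $M_{p_{n}}$ is a polynomial in $M_{p_{1}}$, or at least lies in the commutative algebra $\mathbb{R}[M_{p_{1}},M_{p_{2}}]$. This algebra is commutative by Proposition \ref{prop2.1.}, which gives $M_{p_{1}}M_{p_{2}}=M_{p_{2}}M_{p_{1}}$. Membership follows from (1.1): the set is closed under the linear combinations $aM_{p_{n-2}}+M_{p_{n-3}}$ and $bM_{p_{n-2}}+M_{p_{n-3}}$, and it contains $M_{p_{0}},M_{p_{1}},M_{p_{2}}$. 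Any two elements of a commutative algebra commute, which again gives the corollary.

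Notably, this alternative argument does not even use $a=2$. The main obstacle is therefore only the bookkeeping point that the conjugating matrix is common to all $n$; the rest is routine.
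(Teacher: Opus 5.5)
Your main argument is correct and is essentially the paper's proof: the paper likewise uses the induction statement $P(n)$ from the proof of Proposition \ref{prop2.2.}, which diagonalizes every $M_{p_{n}}$ with one common matrix $T$, and concludes that any two of them commute. Your alternative argument via the commutative algebra $\mathbb{R}[M_{p_{1}},M_{p_{2}}]$ is also valid; this algebra is just $\mathbb{R}[M_{p_{1}}]$, since $M_{p_{2}}=M^{2}_{p_{1}}$ by Remark \ref{remark2.6.}, and the argument shows the hypothesis $a=2$ is not needed.
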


\begin{proof}
From the statement $P\left(n\right)$ proved in Proposition \ref{prop2.2.} it follows that the matrices $M_{p_{n}}$ and $M_{p_{m}}$ are simultaneously diagonalizable, for any positive integers $n$ and $m$ and from this we immediately obtain that 
 $$M_{p_{n}}\cdot M_{p_{m}}= M_{p_{m}}\cdot M_{p_{n}}, \ \left(\forall\right)\  n,m\in \mathbb{N}.$$
\end{proof}

For some values ​​of $a\geq2$ we used Magma Computer Algebra System software and we saw that the matrix $M_{p_{1}}$ is diagonalizable. We obtain the following result.
\begin{proposition}
\label{prop2.4.}
The matrix $M_{p_{1}}$ is diagonalizable for any $a\geq2.$
\end{proposition}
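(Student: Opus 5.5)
Since $M_{p_{1}}$ is a $3\times 3$ real matrix, it suffices to show that its characteristic polynomial has three distinct real roots. Then $M_{p_{1}}$ is diagonalizable over $\mathbb{R}$, exactly as in the argument for Proposition \ref{prop2.2.} with $a=2$.

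Expanding $\det\left(M_{p_{1}}-\lambda I_{3}\right)$ for the companion-type matrix
$M_{p_{1}}=\left(\begin{array}{ccc} 0&1&0\\ 0&0&1\\ 1&a&0\end{array}\right)$
gives the characteristic polynomial
$$f(\lambda)=\lambda^{3}-a\lambda-1 .$$
As a check, $a=2$ yields $\lambda^{3}-2\lambda-1=(\lambda+1)(\lambda^{2}-\lambda-1)$, whose roots are $-1$ and $\frac{1\pm\sqrt{5}}{2}$, in agreement with Proposition \ref{prop2.2.}.

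The key step is to show that $f$ has three distinct real roots for all $a\geq 2$. I would use the discriminant of the depressed cubic $\lambda^{3}+p\lambda+q$, namely $\Delta=-4p^{3}-27q^{2}$. Here $p=-a$ and $q=-1$, so
$$\Delta=4a^{3}-27 .$$
For $a\geq 2$ we get $\Delta\geq 32-27=5>0$, so $f$ has three distinct real roots.

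As a more elementary alternative, one can use sign changes (intermediate value theorem):
\begin{itemize}
\item $f(-\infty)=-\infty$;
\item $f(-1)=a-2\geq 0$;
\item $f(0)=-1<0$;
\item $f(+\infty)=+\infty$.
\end{itemize}
This gives real roots in $(-\infty,-1]$, in $[-1,0)$ and in $(0,\infty)$. The only delicate case is $a=2$, where $-1$ itself is a root. There the roots $-1$ and $\frac{1-\sqrt{5}}{2}$ are still distinct, as already computed. For $a>2$ we have $f(-1)>0$, so the three roots lie in disjoint open intervals and are distinct.

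Once distinctness is established, the three eigenvectors are linearly independent, and the matrix $T$ formed from them gives $M_{p_{1}}=T^{-1}\,\mathrm{diag}(\lambda_{1},\lambda_{2},\lambda_{3})\,T$. I do not expect a real obstacle. The only care needed is the boundary case $a=2$ in the sign-change argument, which the discriminant computation avoids. The bound $a\geq 2$ is not sharp: the argument works whenever $a>\sqrt[3]{27/4}$.
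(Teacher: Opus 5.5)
Your proof is correct, but your main argument takes a different route from the paper's. The paper uses exactly your ``elementary alternative''. For $a>2$ it evaluates $f(\lambda)=\lambda^{3}-a\lambda-1$ at $-a,-1,0,a$ and gets the sign pattern $-,+,-,+$. This gives three roots, one in each of $(-a,-1)$, $(-1,0)$ and $(0,a)$. The case $a=2$ is deferred to Proposition \ref{prop2.2.}, where the roots $-1,\frac{1\pm\sqrt5}{2}$ were computed explicitly. Your use of limits at $\pm\infty$ instead of the points $\pm a$ is an inessential variation.

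Your discriminant computation $\Delta=4a^{3}-27>0$ is shorter and uniform in $a$. It removes the special treatment of $a=2$, where $-1$ becomes a root and the interval argument degenerates. It also identifies the exact range $a>\sqrt[3]{27/4}$ on which the characteristic polynomial has three distinct real roots.

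That range is in fact sharp for diagonalizability over $\mathbb{R}$. The $2\times 2$ minor of $M_{p_{1}}-\lambda I_{3}$ in rows $1,2$ and columns $2,3$ equals $1$, so every eigenvalue has geometric multiplicity one. Hence at $4a^{3}=27$ the double root $-2^{-1/3}$ makes $M_{p_{1}}$ non-diagonalizable. For $a<\sqrt[3]{27/4}$, two eigenvalues are non-real.

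The price of your route is reliance on the discriminant criterion for cubics. The paper's sign-change argument is fully elementary and also localizes the eigenvalues in explicit intervals.

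One small slip in your last paragraph concerns the convention for $T$. If the eigenvectors are the columns of a matrix $S$, then $M_{p_{1}}=S\,\mathrm{diag}(\lambda_{1},\lambda_{2},\lambda_{3})\,S^{-1}$. So in the paper's convention $M_{p_{1}}=T^{-1}\,\mathrm{diag}(\lambda_{1},\lambda_{2},\lambda_{3})\,T$ you should take $T=S^{-1}$, not the eigenvector matrix itself.
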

\begin{proof}
For $a=2,$ $M_{p_{1}}$ is diagonalizable, according to Proposition \ref{prop2.2.}.\\
Now, we consider the case $a>2.$\\
The characteristic equation 
$$det\left(M_{p_{1}}-\lambda\cdot I_{3}\right)=0\Leftrightarrow
\begin{vmatrix}
-\lambda & 1 & 0  \\ 
0 & -\lambda & 1 \\ 
1 & a & -\lambda \\ 

\end{vmatrix}=0\Leftrightarrow \lambda^{3}-a\lambda-1=0.$$
We consider the map $f:\mathbb{R}\longmapsto \mathbb{R},$ $f\left(x\right)=x^{3}-ax-1.$ 
The map $f$ is continuous on $\mathbb{R}.$ We calculate $f\left(-a\right)=-a^{3}+a^{2}-1=a^{2}\cdot \left(1-a\right)-1<0,$ $f\left(-1\right)=-2+a>0,$
$f\left(0\right)=-1<0,$ $f\left(a\right)=a^{3}-a^{2}-1=a^{2}\cdot \left(a-1\right)-1 >0,$ $\left(\forall\right)$ $a>2,$ so, the equation $x^{3}-ax-1=0$ has $3$ distinct real solutions:
$x_{1}\in\left(-a,-1\right),$ $x_{2}\in\left(-1,0\right),$ $x_{3}\in\left(0,a\right).$ So, the matrix $M_{p_{1}}$has $3$ distinct real eigenvalues. It follows from this that the matrix $M_{p_{1}}$ is diagonalizable.
\end{proof}

\begin{proposition}
\label{prop2.5.} Let $\left(M_{p_{n}}\right)_{n\geq0}$ be the sequence of bi-periodic Padovan matrices with $a>2$ and $b$ is an arbitrary real number. Then, we have:\\
i) the matrices $M_{p_{n}}$ are diagonalizable, for any $n$ positive integer;\\
ii) if $\lambda_{1n},$ $\lambda_{2n}$ and $\lambda_{3n}$
are the eigenvalues of the matrix $M_{p_{n}},$ for each $n$ positive integer, then 
\[\lambda_{in} =\begin{cases}a\lambda_{in-2} + \lambda_{in-3}, & \text{if } n \text{ is even}, \\b\lambda_{in-2} + \lambda_{in-3}, & \text{if } n \text{ is odd},\end{cases}\quad \text{for } n \geq 3,\ i=\overline{1,3}.\]

\end{proposition}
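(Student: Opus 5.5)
The plan follows the scheme of Proposition \ref{prop2.2.}, with Proposition \ref{prop2.4.} supplying the key input. For $a>2$, Proposition \ref{prop2.4.} shows that $M_{p_{1}}$ has three distinct real eigenvalues $\lambda_{11},\lambda_{21},\lambda_{31}$, the roots of $\lambda^{3}-a\lambda-1=0$. Hence there is an invertible $T\in\mathcal{M}_{3}(\mathbb{R})$ with $M_{p_{1}}=T^{-1}\cdot \mathrm{diag}(\lambda_{11},\lambda_{21},\lambda_{31})\cdot T$.

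Unlike the case $a=2$, I cannot first check that $M_{p_{2}}$ has distinct eigenvalues. That step is the main obstacle, and I would handle it structurally rather than by computation. A direct multiplication shows $M_{p_{1}}^{2}=M_{p_{2}}$. Indeed, the rows of $M_{p_{1}}^{2}$ are the second row of $M_{p_{1}}$, then its third row, then the first row plus $a$ times the second row, namely $(0,0,1)$, $(1,a,0)$, $(0,1,a)$. Therefore
$$M_{p_{2}}=T^{-1}\cdot \mathrm{diag}(\lambda_{11}^{2},\lambda_{21}^{2},\lambda_{31}^{2})\cdot T,$$
so $\lambda_{i2}=\lambda_{i1}^{2}$ for $i=\overline{1,3}$, and these are real. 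As a backup argument one can use Proposition \ref{prop2.1.}: $M_{p_{2}}$ commutes with $M_{p_{1}}$, so it preserves each one-dimensional eigenspace of $M_{p_{1}}$. Each eigenvector of $M_{p_{1}}$ is then an eigenvector of $M_{p_{2}}$, and this is exactly the result 8.3.26 of \cite{OlSha; 18} invoked in Proposition \ref{prop2.2.}, applied with the matrix having no multiple eigenvalues being $M_{p_{1}}$. Finally, $M_{p_{0}}=I_{3}=T^{-1}I_{3}T$ with $\lambda_{i0}=1$.

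With the same $T$ diagonalizing $M_{p_{0}},M_{p_{1}},M_{p_{2}}$, I would prove by strong induction on $n$ the statement $P(n)$: $M_{p_{n}}=T^{-1}\cdot\mathrm{diag}(\lambda_{1n},\lambda_{2n},\lambda_{3n})\cdot T$, where the $\lambda_{in}$ satisfy the recurrence in ii). For the inductive step, apply (1.1) and split by the parity of $n$. If $n$ is even, then
$$M_{p_{n}}=aM_{p_{n-2}}+M_{p_{n-3}}=T^{-1}\cdot\mathrm{diag}(a\lambda_{i,n-2}+\lambda_{i,n-3})\cdot T.$$
If $n$ is odd, the same holds with $b$ in place of $a$. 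Since linear combinations of matrices diagonalized by a common $T$ are diagonalized by $T$, with the corresponding combinations of the diagonal entries, $P(n)$ follows. This is the computation of (2.5)–(2.6) with $2$ replaced by $a$.

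Conclusion i) is then immediate, because each $M_{p_{n}}$ is similar over $\mathbb{R}$ to a real diagonal matrix. Conclusion ii) follows because the diagonal entries of $T M_{p_{n}} T^{-1}$ are exactly the eigenvalues of $M_{p_{n}}$, and by construction they satisfy the bi-periodic recurrence. As a by-product, all $M_{p_{n}}$ commute pairwise, as in Corollary \ref{cor2.3}.
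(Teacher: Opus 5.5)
Your proposal is correct and follows the paper's route. Proposition~\ref{prop2.4.} supplies a real $T$ that diagonalizes $M_{p_{1}}$, whose eigenvalues are simple, and the strong induction of Proposition~\ref{prop2.2.} with $2$ replaced by $a$ does the rest. The only difference is how you bring $M_{p_{2}}$ under the same $T$: via $M_{p_{1}}^{2}=M_{p_{2}}$ (the paper's Remark~\ref{remark2.6.}), or via commutation with a matrix that has simple eigenvalues. This makes explicit a step the paper's ``similar'' proof leaves implicit, since for $a>2$ the paper never computes the eigenvalues of $M_{p_{2}}$ separately.
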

  
\begin{proof}
Taking into account the result of Proposition \ref{prop2.4.}, the proof of Proposition \ref{prop2.5.} is similar to the proof of Proposition \ref{prop2.2.}
\end{proof}\\

\smallskip

By immediate calculations we obtain the following remark on the matrix $M_{p_{1}}.$
\begin{remark}
\label{remark2.6.}
$M^{2}_{p_{1}}=M_{p_{2}},$ $M^{4}_{p_{1}}=M_{p_{4}},$ for any real number $a.$
\end{remark}
In proving our next results we used the following remark.
\begin{remark}
\label{remark2.7.} (\cite{OlSha; 18}).
i) $Tr\left(T\cdot A\cdot T^{-1}\right)=Tr\left(A\right),$ for any $A$$\in$$M_{n}\left(\mathbb{C}\right)$ and for any invertible matrix $T$$\in$$M_{n}\left(\mathbb{C}\right).$\\
ii) Let $n$ be a positive integer, let $K$ be a commutative field and let $A$$\in$$M_{n}\left(K\right)$. If the matrix $A$ has the eigenvalues ​​$\lambda_{1}, \lambda_{2},...,\lambda_{n}$, then the matrix $A^{l}$ has the eigenvalues ​​$\lambda^{l}_{1}, \lambda^{l}_{2},...,\lambda^{l}_{n}$, where $l\in$$\mathbb{N}^{*}.$
\end{remark}
For $a=2,$ we obtain the following result about matrices $M^{n}_{p_{1}},$ with $n$ positive integers.
\begin{proposition}
\label{prop2.8.} 
Let $\left(L_{n}\right)_{n\geq0}$ be the sequence of Lucas numbers and
let $\left(M_{p_{n}}\right)_{n\geq0}$ be the sequence of bi-periodic Padovan matrices with $a=2$ and $b$ is an arbitrary real number. Then $Tr\left(M^{n}_{p_{1}}\right)= L_{n}+\left(-1\right)^{n}.$

\end{proposition}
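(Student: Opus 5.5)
The plan is to compute the trace through the eigenvalues, using Remark \ref{remark2.7.}. For $a=2$ the proof of Proposition \ref{prop2.2.} gives that $M_{p_{1}}$ has characteristic polynomial $\lambda^{3}-2\lambda-1=(\lambda+1)(\lambda^{2}-\lambda-1)$. Hence its eigenvalues are $-1$, $\alpha=\frac{1+\sqrt{5}}{2}$ and $\beta=\frac{1-\sqrt{5}}{2}$, where $\alpha,\beta$ are exactly the roots of $t^{2}-t-1=0$ from the introduction. Moreover, by (2.1) there is an invertible $T$ with $M_{p_{1}}=T^{-1}DT$, where $D=\mathrm{diag}(-1,\alpha,\beta)$.

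The key steps, in order, are as follows.

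\emph{Step 1.} Raise the factorization to the $n$-th power. Since $M_{p_{1}}=T^{-1}DT$, we get $M^{n}_{p_{1}}=T^{-1}D^{n}T$ for every positive integer $n$; this is a one-line induction. Equivalently, Remark \ref{remark2.7.} ii) says directly that the eigenvalues of $M^{n}_{p_{1}}$ are $(-1)^{n}$, $\alpha^{n}$, $\beta^{n}$.

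\emph{Step 2.} Take traces. By Remark \ref{remark2.7.} i), $Tr\left(M^{n}_{p_{1}}\right)=Tr\left(D^{n}\right)=(-1)^{n}+\alpha^{n}+\beta^{n}$. Applying Binet's formula for the Lucas numbers, $\alpha^{n}+\beta^{n}=L_{n}$, we obtain $Tr\left(M^{n}_{p_{1}}\right)=L_{n}+(-1)^{n}$.

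\emph{Step 3 (optional).} For $n=0$ the identity also holds: $Tr(I_{3})=3=L_{0}+1$.

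The only point needing care is identifying the two irrational eigenvalues with the Fibonacci/Lucas roots $\alpha$ and $\beta$. This amounts to checking that $-1$ is a root of $\lambda^{3}-2\lambda-1$ and that the quotient by $\lambda+1$ is $\lambda^{2}-\lambda-1$, which is immediate. Note also that diagonalizability is not strictly needed: the trace equals the sum of the eigenvalues over $\mathbb{C}$, counted with multiplicity, so Remark \ref{remark2.7.} ii) alone suffices.
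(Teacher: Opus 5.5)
Your proof is correct and follows essentially the same route as the paper: write $M_{p_{1}}=T^{-1}\,\mathrm{diag}(-1,\alpha,\beta)\,T$ as in (2.1), raise it to the $n$-th power, take traces via Remark~\ref{remark2.7.}, and finish with Binet's formula $L_{n}=\alpha^{n}+\beta^{n}$. Your explicit factorization $\lambda^{3}-2\lambda-1=(\lambda+1)(\lambda^{2}-\lambda-1)$ and your check of the case $n=0$ are small additions that the paper leaves implicit.
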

\begin{proof} If, we denote $\alpha=\frac{1+\sqrt{5}}{2}$ and $\beta=\frac{1-\sqrt{5}}{2},$ according to Binet's formula for Lucas numbers, we have $L_{n}=\alpha^{n}+\beta^{n}.$\\
From (2.1.), there exists  an invertible matrix $T$$\in 
\mathcal{M}_{3}\left( \mathbb{R}\right) $ such that: 
$$M_{p_{1}}=
T^{-1}\cdot \left( 
\begin{array}{ccccc}
-1 & 0 & 0  \\ 
0 & \alpha  
 & 0 \\ 
0 & 0 & \beta
 \\ 
\end{array}%
\right)\cdot T.$$
From here, applying Remark \ref{remark2.7.} ii) it follows that 
$$M^{n}_{p_{1}}=T^{-1}\cdot \left( 
\begin{array}{ccccc}
\left(-1\right)^{n} & 0 & 0  \\ 
0 & \alpha^{n}  
 & 0 \\ 
0 & 0 & \beta^{n}
 \\ 
\end{array}%
\right)\cdot T,$$ and applying Remark \ref{remark2.7.} i) we obtain $Tr\left(M^{n}_{p_{1}}\right)= L_{n}+\left(-1\right)^{n}.$
\end{proof}

\medskip

If we denote by $B=Q\cdot M_{p_{1}},$ we quickly obtain that the eigenvalues ​​of the matrix $B$ are $\lambda_{1}=\lambda_{2}=\lambda_{3}=1$, but $B$ is not diagonalizable.
\begin{proposition}
\label{prop2.9.} i) $B^{n}=\left( 
\begin{array}{ccccc}
1 & na & n  \\ 
0 & 1  & 0 \\ 
0 & 0 & 1
 \\ 
\end{array}\right),$ 
for any $n\in$$\mathbb{N}^{*}.$\\
ii) If $a$ is a real number, the set of matrices $G=\left\{\left( 
\begin{array}{ccccc}
1 & na & n  \\ 
0 & 1  & 0 \\ 
0 & 0 & 1
 \\ 
\end{array}%
\right)| n\in \mathbb{Z}\right\}$ forms a group with respect to matrix multiplication.

\end{proposition}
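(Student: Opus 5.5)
First compute $B=Q\cdot M_{p_{1}}$ explicitly. With $Q=\left(\begin{array}{ccc}0&1&1\\1&0&0\\0&1&0\end{array}\right)$ and $M_{p_{1}}=\left(\begin{array}{ccc}0&1&0\\0&0&1\\1&a&0\end{array}\right)$, the rows of $B$ are: first row $(0,0,1)+(1,a,0)=(1,a,1)$, second row $(0,1,0)$, third row $(0,0,1)$. So
$$B=\left(\begin{array}{ccc}1&a&1\\0&1&0\\0&0&1\end{array}\right)=I_{3}+N,\qquad N=\left(\begin{array}{ccc}0&a&1\\0&0&0\\0&0&0\end{array}\right).$$
This also confirms the preceding remark: $B$ is upper triangular with all diagonal entries $1$, and $B\neq I_3$, so it is not diagonalizable.

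For i), the key observation is that $N^{2}=0$. The only nonzero row of $N$ is the first, while the first column of $N$ is zero, so every entry of $N\cdot N$ vanishes. Since $I_3$ commutes with $N$, the binomial formula gives $B^{n}=(I_3+N)^{n}=I_3+nN$, which is exactly the stated matrix.

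To be fully in the style of the paper, one can instead argue by induction on $n$. The case $n=1$ is the computation of $B$ above. For the inductive step, one multiplies $\left(\begin{array}{ccc}1&na&n\\0&1&0\\0&0&1\end{array}\right)$ by $B$ and checks directly that the result has first row $(1,(n+1)a,n+1)$. Either route is routine.

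For ii), write $G_{n}=I_3+nN$ for $n\in\mathbb{Z}$. Because $N^{2}=0$, we get $G_{n}G_{m}=I_3+(n+m)N=G_{n+m}$, so $G$ is closed under multiplication. Associativity is inherited from matrix multiplication. The identity is $G_{0}=I_3$, and the inverse of $G_n$ is $G_{-n}$, since $G_nG_{-n}=G_0$. In fact $n\mapsto G_n$ is a homomorphism from $(\mathbb{Z},+)$ onto $G$, and it is an isomorphism when $G_1=B\neq I_3$, which always holds because of the entry $1$ in position $(1,3)$.

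There is no real obstacle here. The only step needing care is the initial computation of $B$, in particular the order of multiplication, $Q\cdot M_{p_1}$ rather than $M_{p_1}\cdot Q$. Everything else follows from the nilpotency relation $N^{2}=0$.
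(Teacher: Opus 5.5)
Your proof is correct. It reaches the result by a somewhat different mechanism from the paper's.

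\textbf{The paper's route.} It computes $B=Q\cdot M_{p_1}$ exactly as you do and proves the formula for $B^{n}$ by direct induction on $n$. For ii) it computes $B^{-1}$ separately and proves the formula for $(B^{-1})^{n}$ by a second induction. It then leaves the subgroup check as ``easy''; this amounts to observing that $G=\{B^{n}\mid n\in\mathbb{Z}\}$ is the cyclic subgroup generated by $B$.

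\textbf{Your route.} The decomposition $B=I_3+N$ with $N^{2}=0$ replaces both inductions by the single identity $(I_3+nN)(I_3+mN)=I_3+(n+m)N$, valid for all $n,m\in\mathbb{Z}$ at once. Closure, the identity $G_0=I_3$ and the inverses $G_{-n}$ follow immediately, with no separate computation of $B^{-1}$. Part i) is recovered as $B^{n}=G_1^{n}=G_n$.

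\textbf{Comparison.} Your argument also explains why the entries of $B^{n}$ are linear in $n$ and why $B$ is not diagonalizable. It makes explicit that $G$ is infinite cyclic, isomorphic to $(\mathbb{Z},+)$. The paper's induction needs only one $3\times 3$ multiplication per step and no further observation. Your route is shorter and more transparent.
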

\begin{proof} i) $B=Q\cdot M_{p_{1}}=\left( 
\begin{array}{ccccc}
1 & a & 1  \\ 
0 & 1  & 0 \\ 
0 & 0 & 1
 \\ 
\end{array}%
\right).$
It is quickly shown by induction over $n\in \mathbb{N}^{*}$ that $B^{n}=\left( 
\begin{array}{ccccc}
1 & na & n  \\ 
0 & 1  & 0 \\ 
0 & 0 & 1
 \\ 
\end{array}%
\right).$\\
ii) The matrix $B$ is invertible and $B^{-1}=\left( 
\begin{array}{ccccc}
1 & -a & -1  \\ 
0 & 1  & 0 \\ 
0 & 0 & 1
 \\ 
\end{array}%
\right)$. It is easily proven by induction over $n\in \mathbb{N}^{*}$ that $\left(B^{-1}\right)^{n}=\left( 
\begin{array}{ccccc}
1 & -na & -n  \\ 
0 & 1  & 0 \\ 
0 & 0 & 1
 \\ 
\end{array}%
\right)$.\\
Using these, it is easy to show that $G$ is a subgroup of the group $\left(GL_{n}\left(\mathbb{R}\right), \cdot\right)$ of invertible matrices of order $n,$ with real number elements.
\end{proof}

\begin{equation*}
\end{equation*}

\begin{equation*}
\end{equation*}

\begin{equation*}
\end{equation*}%
\qquad

Diana SAVIN

{\small Faculty of Mathematics and Computer Science, }

{\small  Transilvania University of Bra\c{s}ov,}

{\small Iuliu Maniu street 50, Braşov 500091, Rom\^{a}nia, }

{\small https://www.unitbv.ro/en/contact/search-in-the-unitbv-community/4292-diana-savin.html}

{\small e-mail: \ diana.savin@unitbv.ro, \ dianet72@yahoo.com}

\end{document}